\newtheorem{theorem}{Theorem}[section]
\newtheorem{lemma}[theorem]{Lemma}
\newtheorem{definition}[theorem]{Definition}
\newtheorem{remark}[theorem]{Remark}
\newcommand{\condE}[2]{{\mathbb{E}\left[ \left. #1  \right\rvert #2 \right]}}
\title{Depths in random recursive metric spaces}
\author{Colin Desmarais\thanks{Supported by the Swedish Research Council, the Knut and Alice Wallenberg Foundation, and the Swedish Foundation's starting grant from the Ragnar S\"oderberg Foundation.}\\
{\small Department of Mathematics}\\[-0.8ex]
{\small Uppsala University}\\[-0.8ex] 
{\small Uppsala, Sweden}\\
{\small\tt colindesmarais@gmail.com}}
\begin{document}

\maketitle

\begin{abstract}
As a generalization of random recursive trees and preferential attachment trees, we consider random recursive metric spaces. These spaces are constructed from random blocks, each a metric space equipped with a probability measure, containing a labelled point called a hook, and assigned a weight. Random recursive metric spaces are equipped with a probability measure made up of a weighted sum of the probability measures assigned to its constituent blocks. At each step in the growth of a random recursive metric space, a point called a latch is chosen at random according to the equipped probability measure and a new block is chosen at random and attached to the space by joining together the latch and the hook of the block. We use martingale theory to prove a law of large numbers and a central limit theorem for the insertion depth; the distance from the master hook to the latch chosen. We also apply our results to further generalizations of random trees, hooking networks, and continuous spaces constructed from line segments. 
\end{abstract}
\section{Introduction}

A well-studied class of random trees are random recursive trees, whereby a sequence of trees $T_0, T_1, T_2, \ldots$ is grown by attaching a new child to a vertex chosen uniformly at random to construct the successor. In this work, we generalize the model by constructing a sequence $\bm{G}_0, \bm{G}_1, \bm{G}_2, \ldots$ of metric spaces at random. At each step $n$ in the growth process, a point called a {\em latch} is chosen at random according to a probability measure on the space $\bm{G}_{n-1}$. A new randomly chosen metric space $\bm{B}_n$ called a {\em block}, equipped with a probability measure, is attached to the latch via a labelled point in $\bm{B}_n$, thereby creating a new metric space $\bm{G}_n$ with a new probability measure defined as a weighted sum of the probability measures on $\bm{G}_{n-1}$ and $\bm{B}_n$. Since the growth process generalizes the growth process of random recursive trees, we call $\bm{G}_0, \bm{G}_1, \bm{G}_2, \ldots$ a sequence of {\em random recursive metric spaces}. These random recursive metric spaces, defined more precisely below, resemble models of random metric spaces introduced in \cite{SENI:19}, though in contrast to the present work, the blocks in $\cite{SENI:19}$ are renormalized by a constant that vanishes with $n$. If each block consists of two vertices joined by an edge and the attachment of the block is done by fusing one of the vertices of the block with the latch, and the probability measure on the space is uniform on all vertices, then we construct random recursive trees as discussed above. 

Several previously studied generalizations of random recursive trees are encompassed in our model as well. These include for example preferential attachment trees. Similar to random recursive trees, the growth process is described by attaching an edge at each step by fusing one vertex of the edge to a vertex $v$ chosen at random, but in this case the choice of $v$ is made proportional to the outdegree of $v$. Another generalization of random recursive trees is the weighted random recursive tree. In this context, every time a new vertex appears in the tree it is assigned a random weight, and the choice of the new parent at each step is made proportional to the weight of the vertex. If instead of attaching a single edge at each step, an entire new graph is attached, the resulting model is a hooking network. In all of these examples, the {\em insertion depth}, that is, the distance to the root from the vertex chosen at random in each step in the growth process, was shown to follow a normal limit law once scaled by $\sqrt{\ln n}$ (see for example \cite{DEVR:88,DEMA:21, ESLA:22, LODE:22, MAHM:91, MAHM:92, MABR:19, SENI:21}, and many more). The main result of this work is Theorem $\ref{thm:main}$, where we prove a law of large numbers and a central limit theorem for the insertion depth in random recursive metric spaces. 

In Sections \ref{sec:whmps} and \ref{sec:model}, the description of the model introduced above is made more precise. The main result of this work (Theorem \ref{thm:main}) is stated in Section \ref{sec:main} along with a brief outline of the proof method. Examples of our model, including random recursive trees, preferential attachment trees, weighted random recursive trees, hooking networks, and constructions from line segments, and how Theorem \ref{thm:main} applies, are included in Section \ref{sec:examples}. The proof of Theorem \ref{thm:main} is contained in Section \ref{sec:proofs}.

\subsection{Weighted hooked metric probability spaces}\label{sec:whmps}

We define a {\em weighted hooked (complete separable) metric probability space} to be a 5-tuple $(b, d, h, p, w)$ in the following way: $(b,d)$ is a complete and separable metric space and $h$ is a point identified in $b$ called a {\em hook}, $p$ is a Borel probability measure on $b$, and $w$ is a real number called a {\em weight}. We wish to define a probability space of weighted hooked metric probability spaces. We start with the space $\mathcal{X}$ of equivalence classes under isomorphism (bijective isometry) of (complete separable) metric measure spaces with full support (the support of the measure is the whole space). Gromov proved \cite{GROM:99} that $\mathcal{X}$ along with the Gromov-Prohorov metric $d_{GP}$ is a complete separable metric space. We extend the Gromov-Prohorov metric by first defining the hooked (or rooted) Gromov (pseudo)distance $d_{GP}^\bullet$ on the space of hooked metric measure spaces by
\[ d_{GP}^\bullet\left( (b_1, h_1, d_1, p_1), (b_2, h_2, d_2, p_2)\right) := \inf_{E, \phi_1, \phi_2} \max\left((d_{P}\left( \phi_{1,\#}(p_1), \phi_{2,\#}(p_2)\right), d_E\left(\phi_1(h_1), \phi_2(h_2)\right)\right),\]
where the infimum is taken over all complete separable metric spaces $(E, d_E)$ such that there exists isometric embeddings $\phi_1:b_1 \rightarrow E$ and $\phi_2 : b_2 \rightarrow E$ from the complete separable metric spaces $(b_1, d_1)$ and $(b_2, d_2)$ to $(E, d_E)$. The points $h_1$ and $h_2$ are labelled points (hooks) in $b_1$ and $b_2$ respectively. The measures $\phi_{1,\#}(p_1)$ and $\phi_{2,\#}(p_2)$ on $E$ are the push-forward measures of $p_1$ and $p_2$ respectively, while $d_P$ is the Prohorov metric on the measures on $E$. Let $\mathcal{X}^\bullet$ be the space of equivalence classes under hook preserving isomorphisms of hooked metric measure spaces with full support, that is, $(b_1, d_1, h_1, p_1)$ and $(b_2, d_2, h_2, p_2)$ belong to the same equivalence class if there exists a bijective isometry $\phi:b_1 \rightarrow b_2$ such that $\phi(h_1) = h_2$. By using similar arguments as for the Gromov-Hausdorff-Prohorov metric for rooted compact metric measure spaces in \cite{ABDH:13}, one can show that $(\mathcal{X}^\bullet, d_{GP}^\bullet)$ is a complete and separable metric space (the only difference in the argument is we can omit the ``Hausdorff" part). We use the word `hook' in this work, but it serves the same purpose as the `root' in \cite{ABDH:13}. Finally, since $\mathcal{X}^\bullet$ and $\mathbb{R}$ are Polish spaces, so is $\mathcal{X}^\bullet \times \mathbb{R}$, and so we can define a Borel probability measure on this space. We may now define a random {\em block} $\bm{B} = (B, D', H', P', W)$ as a random element of $\mathcal{X}^\bullet \times \mathbb{R}$ (according to some Borel probability measure). 

\subsection{Definition of random recursive metric spaces}\label{sec:model}

Let $\bm{B}$ be a random block as defined in the previous section such that $W \geq 0$ and $\mathbb{P}(W=0) < 1$. For $n \geq 1$, let $\bm{B}_n = (B_n, D_n', H_n', P_n', W_n)$ be independent and identically distributed copies of $\bm{B}$ and let $\bm{B}_0 = (B_0, D_0', H, P'_0, W_0)$ be a random block which may or may not have the same distribution as $\bm{B}$. We add the condition that $W_0 > 0$; notice that $W_0$ might not have the same distribution as $W$.

 Formally, random recursive metric spaces $\bm{G}_n = (G_n, D_n, H, P_n, S_n)$ are random elements  from $\mathcal{X}^\bullet \times \mathbb{R}$ constructed recursively from the blocks $\bm{B}_n$. We start by setting $\bm{G}_0 = (G_0, D_0, H, P_0, S_0) = \bm{B}_0$. The identified point $H$ serves as the master hook of all following random recursive metric spaces. At each step $n \geq 0$, we grow $\bm{G}_{n+1}$ from $\bm{G}_n$ by randomly choosing a point $v_{n+1} \in G_n$ called a {\em latch} according to the probability $P_n$. Next we identify together the latch $v_{n+1}$ with the hook $H'_{n+1}$ to form $G_{n+1}$. We let $D_{n+1}$ be the concatenation of the metrics $D_n$ and $D'_{n+1}$ in the canonical way and set $S_{n+1} = S_n + W_{n+1}$. Define $P_{n+1}$ to be the weighted sum of the probabilities $P_n$ and $P'_{n+1}$; more precisely, extend the probability measures to all of $G_{n+1}$ such that for any event $A \subseteq G_{n+1}$, the extensions $\widehat{P_n}$ and $\widehat{P'_{n+1}}$ satisfy $\widehat{P_n}(A) = P_n(A\cap G_{n})$ and $\widehat{P'_{n+1}}(A) = P'_{n+1}(A\cap B_{n+1})$ and define 
\begin{equation}\label{eq:probabilitysum}
 P_{n+1} = \frac{S_n}{S_{n+1}}\widehat{P_n} + \frac{W_{n+1}}{S_{n+1}}\widehat{P'_{n+1}}.
\end{equation}
Set $\bm{G}_{n+1} = (G_{n+1}, D_{n+1}, H, P_{n+1}, S_{n+1})$. 

By the recursive nature of \eqref{eq:probabilitysum}, the probability measure $P_n$ is a weighted sum of extensions of the probability measures $P_k'$ of the constituent blocks $\bm{B}_k$ that make up $\bm{G}_n$. In a slight abuse of notation, we will write 
\[ P_n = \sum_{k=0}^n \frac{W_k}{S_n} P'_k.\]

Models with constructions similar to random recursive metric spaces have been studied. These include models where the blocks consist of finite line segments \cite{ADGO:17, BOVA:06,CUHA:17, HAAS:17}, and constructions of iterative gluing of metric spaces introduced by S\'enizergues \cite{SENI:19, SENI:20}. In fact, the notation for random recursive metric spaces presented in this paper is heavily inspired by the notation introduced in \cite{SENI:19}. In most of these models, the blocks are compact and scaled by a factor of roughly $n^{-\alpha}$ for some $\alpha >0$ as they are attached. Under certain assumptions, it is shown that the probability measures $P_n$ converge weakly to some limiting measure and that the limiting metric space is compact. Since our blocks are identically distributed, the growth of the random recursive metric space is unbounded (and so do not have a compact limit).

\subsection{Main Result}\label{sec:main}

We start by defining the insertion depth of a block in a random recursive metric space and the random depth within a block

\begin{definition}
For a sequence of random recursive metric spaces $\bm{G}_0, \bm{G}_1, \bm{G}_2, \ldots,$ define the (random) {\em insertion depth} $\Delta_n$ of the block $\bm{B}_n$ as the distance from the master hook to the $n$'th latch $v_n$. That is, 
\[ \Delta_n = D_n(H,v_n).\]
\end{definition}

\begin{definition}
Given the block $\bm{B}_n$, define the {\em random depth} $\Delta'_n$ within $\bm{B}_n$ to be the distance $D_n'(H'_n, U)$ from the hook $H_n'$ to a point $U \in B_n$ chosen at random according to $P_n'$. 
\end{definition}

Note that for $n \geq 1$, all $\Delta_n'$ are identically and independently distributed to a random variable $\Delta'$. The distance functions $D_n(H,x)$ and $D'_n(H_n, x)$ are continuous functions on their respective metric spaces, and so $\Delta_n$ and $\Delta'_n$ are measurable random variables. We are now ready to state the main result of this work. 
\begin{theorem}\label{thm:main}
Suppose that $\mathbb{E}\left[W^2\right], \mathbb{E}\left[(W\Delta')^2\right]$, and $\mathbb{E}\left[W(\Delta')^2\right]$
are all finite. Then 
\[ \frac{\Delta_n}{\ln n} \xrightarrow{p} \frac{\mathbb{E}\left[W\Delta'\right]}{\mathbb{E}[W]}\]
and 
\[ \frac{\Delta_n - \mathbb{E}[W]^{-1}\mathbb{E}\left[W\Delta'\right]\ln n}{\sqrt{\ln n}} \xrightarrow{d} \mathcal{N}\left(0,\frac{ \mathbb{E}\left[W(\Delta')^2\right]}{\mathbb{E}[W]}\right).\]
\end{theorem}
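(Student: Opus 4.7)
The approach is to derive an approximate distributional recursion for $\Delta_n$, iterate it to exhibit $\Delta_n$ as (roughly) a sum of independent random variables, and then apply classical limit theorems to that sum. Since the latch $v_{n+1}$ is drawn from $P_n = \tfrac{S_{n-1}}{S_n}\widehat{P_{n-1}} + \tfrac{W_n}{S_n}\widehat{P'_n}$, conditioning on the natural filtration $\mathcal{F}_n$ gives the exact identity in law: with probability $S_{n-1}/S_n$ the new latch lies in $G_{n-1}$ and $\Delta_{n+1}$ has the same $\mathcal{F}_{n-1}$-conditional law as $\Delta_n$, while with probability $W_n/S_n$ the new latch lies in $B_n$ and $\Delta_{n+1} = \Delta_n + \Delta'_n$ for $\Delta'_n = D'_n(H'_n, U)$ with $U \sim P'_n$.

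Since $\bm{B}_n$ is independent of $\mathcal{F}_{n-1}$ and $S_n/n \to \mathbb{E}[W]$ almost surely by the strong law, the case-2 probability $W_n/S_n$ is approximately $W_n/(n\mathbb{E}[W])$. Averaging over $\bm{B}_n$, the overall case-2 probability is approximately $1/n$, and because $W_n$ and $\Delta'_n$ are both functions of the same block, the contribution conditional on case 2 is distributed as the size-biased variable $\tilde{\Delta}'$ characterised by $\mathbb{E}[f(\tilde{\Delta}')] = \mathbb{E}[Wf(\Delta')]/\mathbb{E}[W]$. Iterating the resulting approximate recursion $\Delta_{n+1} \stackrel{d}{\approx} \Delta_n + B_{n+1}\tilde{\Delta}'_{n+1}$, in which $B_{n+1}$ is Bernoulli with parameter $1/n$ and $\tilde{\Delta}'_{n+1}$ is an independent size-biased copy of $\Delta'$, yields
\[
\Delta_n \stackrel{d}{\approx} \Delta_1 + \sum_{k=1}^{n-1} B_k\tilde{\Delta}'_k,
\]
a sum of mutually independent random variables that generalises the classical representation of $\Delta_n$ in a random recursive tree as a sum of Bernoulli$(1/k)$ variables, replacing the $+1$ increment by the size-biased $\tilde{\Delta}'$.

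A direct computation on $\Lambda_n := \sum_{k=1}^{n-1} B_k\tilde{\Delta}'_k$ gives $\mathbb{E}[\Lambda_n] \sim \mathbb{E}[W]^{-1}\mathbb{E}[W\Delta']\ln n$ and $\mathrm{Var}(\Lambda_n) \sim \mathbb{E}[W]^{-1}\mathbb{E}[W(\Delta')^2]\ln n$. The hypotheses $\mathbb{E}[(W\Delta')^2], \mathbb{E}[W(\Delta')^2] < \infty$ imply that $\tilde{\Delta}'$ has finite second moment, and this suffices to verify the Lindeberg condition for the triangular array $\{B_k\tilde{\Delta}'_k\}_{1\le k\le n-1}$, yielding the stated Gaussian limit; the weak law of large numbers then follows from the variance estimate and Chebyshev.

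The principal difficulty is to make the approximation $\Delta_n \stackrel{d}{\approx} \Lambda_n$ quantitative. The plan is to couple $\Delta_n$ and $\Lambda_n$ on a common probability space and control their difference step by step. At each step one must replace $W_n/S_n$ by $1/n$ (an error of order $O(n^{-1})$ in $L^2$ by the CLT for $S_n$) and replace the $W_n/S_n$-biased law of $\bm{B}_n$ on the case-2 event by the size-biased law (an error of similar order). Under the full moment conditions $\mathbb{E}[W^2], \mathbb{E}[(W\Delta')^2], \mathbb{E}[W(\Delta')^2]$ and $\mathbb{E}[(\Delta')^2] < \infty$, these errors can be summed to show that the cumulative discrepancy is $o_p(\sqrt{\ln n})$, which transfers the central limit theorem and law of large numbers from $\Lambda_n$ to $\Delta_n$.
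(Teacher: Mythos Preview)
Your plan is essentially the same as the paper's: represent $\Delta_n$ as a sum $\sum_{k} J_{n,k}\Delta'_{n,k}$ with $J_{n,k}\sim\mathrm{Be}(W_k/S_k)$ (the paper obtains this directly as Lemma~\ref{lem:buckets}, you obtain it by iterating the one-step recursion), replace it by an independent sum, apply Chebyshev and Lindeberg, and show the discrepancy is $o_p(\sqrt{\ln n})$. The only genuine difference is in the independent approximation: the paper replaces $S_k$ by $\mathbb{E}[S_k]$ and keeps $W_k$ in the Bernoulli parameter, so that $X_{n,k}=I_{n,k}\Delta'_{n,k}$ with $I_{n,k}\sim\mathrm{Be}(W_k/\mathbb{E}[S_k])\mathbb{I}_{\{W_k\le\mathbb{E}[S_k]\}}$ depends only on $\bm{B}_k$; you instead average over $W_k$ as well, obtaining a $\mathrm{Be}(1/k)$ coin multiplied by a size-biased copy $\tilde{\Delta}'$. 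The two choices agree to first and second order, and your size-biased formulation makes the constants $\mathbb{E}[W\Delta']/\mathbb{E}[W]$ and $\mathbb{E}[W(\Delta')^2]/\mathbb{E}[W]$ appear transparently as moments of $\tilde{\Delta}'$. Be aware, however, that the step you summarise in one sentence (``these errors can be summed to show the cumulative discrepancy is $o_p(\sqrt{\ln n})$'') is where almost all the work lies: the paper needs an explicit coupling of $J_{n,k}$ and $I_{n,k}$ and a five-regime decomposition of $|Y_{n,k}-X_{n,k}|$ invoking Borel--Cantelli, the Hsu--Robbins--Erd\H{o}s theorem, and a moment bound of the form $\sum_k k^{-1/4}\mathbb{P}(|S_k-\mathbb{E}S_k|\ge k^{7/8})<\infty$; your appeal to ``$O(n^{-1})$ in $L^2$ by the CLT for $S_n$'' is not yet at that level of precision and would need to be fleshed out along similar lines.
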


\begin{remark}
Note that we make no assumptions on $\mathbb{E}\left[(\Delta')^2\right]$ nor on $\mathbb{E}\left[\Delta'\right]$ for Theorem \ref{thm:main}. Of course if $W$ and $\Delta'$ are independent, then moment conditions on $\Delta'$ are required. 
\end{remark}

\begin{remark}
It may be tempting to hope for almost sure convergence instead of convergence in probability in Theorem \ref{thm:main}. But almost sure convergence doesn't hold in even the simplest case, random recursive trees. Devroye proved that for the insertion depth $\Delta_n$ in random recursive trees, $\Delta_n/\ln n \xrightarrow{p} 1$ \cite{DEVR:88}. But it is well known that the degree of the root in random recursive trees is unbounded, so $\Delta_n = 0$ occurs infinitely often. Even more, we know that $H_n = \max\{\Delta_0, \Delta_1, \ldots, \Delta_n\}$ satisfies $H_n/\ln n \xrightarrow{a.s.} e$ \cite{PITT:94}, and so almost sure convergence of the insertion depth cannot hold. 
\end{remark}

To prove Theorem \ref{thm:main}, we make use of an observation present in many works on insertion depths in random recursive trees and their generalizations. For each $n$, define independent Bernoulli random variables $Y_{n,0}, Y_{n,1}, Y_{n,2}, \ldots, Y_{n, n-1}$ such that 
\[ Y_{n,k} \sim \text{Be}\left(\frac{1}{k+1}\right).\]
It was shown in \cite{DEVR:88} that the insertion depth $\Delta_n$ of the $n$'th vertex in a random recursive tree is distributed as the number of records in a uniform random permutation, and so has distribution $\Delta_n \sim \sum_{k=0}^{n-1} Y_{n,k}$. An application of Lindeberg's condition on the sum of the independent random variables $Y_{n,k}$ yields a normal limit law for $\Delta_n$. The observation that the insertion depth can be written as a sum of Bernoulli random variables appears in several works, see for example \cite{CUHA:17, DOBR:96, DRMO:09, ESLA:22, HAAS:17, KUWA:10, LODE:22, MABR:19, SENI:19, SENI:21}. In this work, we show that the insertion depth for random recursive metric spaces is a sum of a product of $\Delta'_{n,k} \sim \Delta'_k$ and Bernoulli random variables $J_{n,k}$, this time with success probability $W_k/S_k$ (see Lemma \ref{lem:buckets}). We then approximate these sums of random variables with martingales that satisfy the conditions necessary to apply a martingale central limit theorem, specifically \cite[Corollary 3.1]{HAHE:80}.

\section{Applications of Theorem \ref{thm:main}}\label{sec:examples}

In this section we outline how random recursive metric spaces generalize models of preferential attachment trees, hooking networks, and constructions of iterative gluing of line segments. We recover previous results on the insertion depths, and further generalize these models in natural ways to prove new results. The applications listed here are by no means exhaustive, but are meant to be illustrative of the generality of random recursive metric spaces and of Theorem \ref{thm:main}. 

\subsection{Models of random trees}\label{sec:trees}

Consider the following generalization of random recursive trees. Set $\alpha \geq 0$, let $A, A_1, A_2, A_3, \ldots$ be i.i.d. nonnegative random real numbers such that $\mathbb{P}(A= 0) < 1$, and let $A_0$ be a strictly positive random real number. A sequence of trees $T_0, T_1, T_2, \ldots$ is constructed with an initial tree $T_0$ consisting of a single vertex $v_0$ acting as the root of all trees that follow. At each step $n \geq 0$ in the growth process, given $A_0, A_1, \ldots, A_n$ and $T_n$, a vertex $v_k$ is chosen at random from $T_{n}$ with probability 
\begin{equation}\label{eq:prefatt}
 \frac{\alpha \deg^+(v_k) + A_k}{\sum_{i=0}^{n}( \alpha\deg^+(v_i) + A_i)},
\end{equation}
where $\deg^+(v_i)$ is the number of children of $v_i$, and an edge is drawn from $v_k$ to a new child vertex $v_{n+1}$ to form $T_{n+1}$. Since $T_{n}$ has $n+1$ vertices and $n$ edges, the denominator of \eqref{eq:prefatt} simplifies to $\alpha n + \sum_{i=0}^{n}A_i.$

If $A_n=1$ for all $n$, then $T_0, T_1, T_2, \ldots$ is a sequence of linear preferential attachment trees; this class of random tree models includes random recursive trees ($\alpha = 0$) and random plane-oriented recursive trees ($\alpha =1$) \cite{MAHM:92, SZYM:87}. If the $A_i$ are random and $\alpha =0$, then a sequence of random weighted recursive trees is constructed as in \cite{LOOR:20A, MABR:19, SENI:21}. Allowing $\alpha > 0$ results in a sequence of preferential attachment trees with additive i.i.d. random fitness (this model appears for example in \cite{LOOR:20}).

We can further generalize these models of random trees by giving random lengths to the edges, similar to models from \cite{BOVA:06, BRDE:06}. For a random variable $L\geq 0$ such that $\mathbb{P}(L=0) < 1$, when attaching the vertex $v_n$ to the tree, we can give a `length' $L_n \sim L$ to the edge joining $v_n$ to its parent. The insertion depth $\Delta_n$ is then the sum of the lengths $L_k$ along the path from the root to $v_n$. The random variable $L_n$ can be independent from $A_n$, or we may have a joint distribution on $(A_n, L_n)$. The tree models described above correspond to the case $L=1$. 

We may construct these trees as random recursive metric spaces. For $n \geq 1$, consider blocks $\bm{B}_n$ where $B_n$ is the graph $K_2$ consisting of two vertices joined by an edge. One of the vertices is labelled $H'_n$ and we will call the other vertex $v_n$. The metric $D'_n$ on the vertices of $B_n$ is defined so that $D'_n(H_n', v_n) = L_n$. The probability measure $P'_n$ is defined such that $P'_n(H'_n) = \alpha/(\alpha + A_n)$ and $P'_n(v_n) = A_n/(\alpha + A_n)$. The weight of the block $\bm{B}_n$ is $W_n = \alpha + A_n$. Define $\bm{B}_0$ with $B_0$ consisting of a single vertex $v_0$ with probability measure $P'_0(v_0) =1$. Set the weight of $\bm{B}_0$ to be $W_0 = A_0$. This construction allows us to prove the following theorem. 

\begin{theorem}\label{thm:trees}
For $\alpha \geq 0$, let $T_0, T_1, T_2, \ldots$ be preferential attachment trees with fitness distribution $A$ and random edge lengths with distribution $L$ as described above, and let $\Delta_n$ be the insertion depth of the vertex $v_n$. If $\mathbb{E}\left[A^2\right], \mathbb{E}\left[A^2L^2\right]$ and $\mathbb{E}\left[AL^2\right]$ are all finite, then 
\[  \frac{\Delta_n}{\ln n} \xrightarrow{p} \frac{\mathbb{E}[AL]}{\alpha + \mathbb{E}[A]} \qquad \mathrm{and} \qquad \frac{\Delta_n - \left(\alpha + \mathbb{E}[A]\right)^{-1} \mathbb{E}[AL]\ln n}{\sqrt{\ln n}} \xrightarrow{d} \mathcal{N}\left(0, \frac{\mathbb{E}\left[AL^2\right]}{\alpha + \mathbb{E}[A]}\right).\]
\end{theorem}

\begin{proof}
We start by showing that the random recursive metric spaces $\bm{G}_0, \bm{G}_1, \bm{G}_2, \ldots$ constructed from the blocks $\bm{B}_n$ above are distributed as the trees $T_0, T_1, T_2, \ldots$. First we note that $G_n$ is a tree; every time a latch is attached with a hook, these two vertices are joined together, leaving a new child adjacent to the latch. To see that the trees constructed coincide with the random tree models described above, look at the vertex $v_k$ (belonging to the block $\bm{B}_k$), and suppose $I_{n,k} \subseteq \{k+1, \ldots, n\}$ is the set of indices of blocks $\bm{B}_i$ whose hook $H'_i$ is joined with $v_k$. Then the probability that $v_k$ is chosen as the latch to construct $\bm{G}_{n+1}$, by the definition of $P_n$ as the weighted sum of the probability measures $P'_0, P'_1, P'_2, \ldots$,  is given by 
\begin{align*}
P_n(v_k) &= \frac{W_k}{S_n}P'_k(v_k) + \sum_{i\in I_{n,k}}\frac{W_i}{S_n}P'_i(H_i') \\
&= \frac{1}{S_n}\left((\alpha + A_k)\frac{A_k}{\alpha + A_k} + \sum_{i \in I_{n,k}} (\alpha + A_i)\frac{\alpha}{\alpha + A_i}\right) \\
&= \frac{\alpha |I_{n,k}| + A_k}{S_n}.
\end{align*}
Since $|I_{n,k}| = \deg^+(v_k)$ and 
\[S_n = A_0 + \sum_{k=1}^{n}(\alpha + A_k) = \alpha n + \sum_{k=0}^n A_k,\]
we see that $P_{n}(v_k)$ is the same as in \eqref{eq:prefatt}, and so the two models are equal in distribution. 

Next we evaluate $W$ and $\Delta'$. We see that $\mathbb{E}[W] = \alpha + \mathbb{E}[A]$ and $\mathbb{E}\left[W^2\right] = \alpha^2 + 2\alpha\mathbb{E}[A] + \mathbb{E}\left[A^2\right]$. Conditioned on $(A_n, L_n)$, the random variable $\Delta'_n$ has distribution 
\[ \Delta'_n = \begin{cases}
0 & \text{ with probability } \frac{\alpha}{\alpha + A_n} \\
L_n & \text{ with probability } \frac{A_n}{\alpha + A_n}
\end{cases}\]
 and $\Delta'_0 = 0$. Quick calculations yield $\mathbb{E}[W\Delta'] = \mathbb{E}[AL]$ and $\mathbb{E}[W(\Delta')^2] = \mathbb{E}\left[AL^2\right]$ as well as $\mathbb{E}\left[(W\Delta')^2\right] = \alpha \mathbb{E}\left[AL^2\right] + \alpha\mathbb{E}\left[A^2L^2\right]$. The moment conditions to apply Theorem \ref{thm:main} are satisfied if $\mathbb{E}\left[A^2\right], \mathbb{E}\left[A^2L^2\right]$ and $\mathbb{E}\left[AL^2\right]$ are all finite, proving the theorem.
\end{proof}

When $L=1$ these results coincide with central limit theorems proved for random recursive trees when $\alpha = 0$ \cite{DEVR:88, MAHM:91}, for random plane-oriented recursive trees when $\alpha = 1$ \cite{MAHM:92}, and for random weighted recursive trees (where stronger results on the profile of vertices was proved in \cite{MABR:19, SENI:21}). A weak law of large numbers was proved for specific cases of random recursive trees with random edge lengths \cite{BOVA:06, HAAS:17}, but the general result of Theorem \ref{thm:trees} is new.

\subsection{Hooking networks}

For a graph $G$, let $E(G)$ and $V(G)$ be the edge set and vertex set of $G$ respectively, and let $\deg(v)$ be the degree of a vertex in $G$. Let $\mathcal{C} = \{G_1, G_2, G_3, \ldots\}$ be a collection of finite connected graphs (self-loops and multi-edges are allowed) called blocks, each with an identified vertex $h_i$ called a hook, and each associated with a positive probability $p_i$ such that $\sum p_i = 1$. Let $\chi \geq 0$ and $\rho \in \mathbb{R}$ be two parameters with the condition that $\chi+ \rho > 0$. The hooking networks $\mathcal{H}_0, \mathcal{H}_1, \mathcal{H}_2, \ldots$ are constructed by first setting $\mathcal{H}_0$ as an isomorphic copy of a graph selected from $\mathcal{C}$ (at random or not). For $n\geq 0$, $\mathcal{H}_{n+1}$ is constructed from $\mathcal{H}_{n}$ by first selecting a vertex $v$ called a latch from $\mathcal{H}_{n}$ with probability 
\begin{equation}\label{eq:oldhook}
 \frac{\chi \deg(v) + \rho}{\sum_{u \in V(\mathcal{H}_{n})}( \chi \deg(u) + \rho)}.
\end{equation}
Once a latch is selected, a graph $G_i$ is selected from $\mathcal{C}$ with probability $p_i$, and an isomorphic copy of $G_i$ is attached by fusing the hook $h_i$ with the latch $v$. The selection probability in \eqref{eq:oldhook} is very similar to the selection probability \eqref{eq:prefatt} for trees. The difference arises from the fact that \eqref{eq:oldhook} is a function of the vertex degree instead of the number of children, and the condition $\chi + \rho > 0$ guarantees that this probability is positive and well defined. Hooking networks were introduced in \cite{MAHM:19}, where the set of blocks consisted of a single graph. The model was generalized in \cite {DEHO:20} to allow for finitely many blocks in $\mathcal{C}$. 

We may generalize hooking networks further by replacing $\rho$ with random fitnesses; whenever a graph $G_i$ is attached to the hooking network, every vertex $v \neq h_i$ from $G_i$ is assigned a random fitness $\rho_v$, such that $\chi\deg(v) + \rho_v \geq 0$ and $\mathbb{P}(\chi\deg(v) + \rho_v = 0) < 1$, independently from the construction of the hooking networks so far (though not necessarily independently from the other vertices within $G_i$). Then for future hooking, the probability of selecting $v \in \mathcal{H}_{n}$ to be a latch is given by 
\begin{equation}\label{eq:hook}
\frac{\chi \deg(v) + \rho_v}{\sum_{u \in V(\mathcal{H}_{n})}(\chi \deg(u) + \rho_u)}.
\end{equation}

We can construct random recursive metric spaces that are distributed as hooking networks. For $n\geq 0$, we define $\bm{B}_n$ so that $B_n$ is a graph $G_i$ selected at random from $\mathcal{C}$ with probability $p_i$, with an identified hook $H_n' = h_i$, and $D'_n$ is the standard graph metric on $B_n$. Let $\deg_{B_n}(v)$ be the degree of $v$ within $B_n$. Random fitnesses $\rho_v$ are then assigned to each vertex $v \in B_n$ such that when $v$ is not the hook, $\chi\deg_{B_n}(v) + \rho_v \geq 0$ and $\mathbb{P}(\chi\deg_{B_n}(v) + \rho_v = 0) < 1$, while for the hooks we have the condition $\chi\deg_{B_0}(H) + \rho_{H} > 0$ when $n=0$ and we set $\rho_{H'_n} = 0$ when $n\geq 1$ (since a hook will be fused with a vertex in the hooking network and only contribute $\chi\deg_{B_n}(H_n')$ to future hooking). The weight of $\bm{B}_n$ is given by 
\[ W_n = \sum_{v \in V(B_n)}(\chi \deg_{B_n}(v) + \rho_v) = 2\chi|E(B_n)| + \sum_{v \in V(B_n) }\rho_v,\]
where the last equality follows from the handshaking lemma.  The probability $P'_n$ is defined such that $P'_n(v) = (\chi \deg_{B_n}(v) + \rho_v)/W_n$. The condition $\chi\deg_{B_0}(H) + \rho_{H} > 0$ for $n=0$ guarantees that $W_0 > 0$.

Given the definition of the blocks $\bm{B}_n$, we can explicitly calculate the values $\mathbb{E}[W], \mathbb{E}[W\Delta']$, and $\mathbb{E}\left[W(\Delta')^2\right]$. To start we have 
\begin{equation}\label{eq:hookweight}
 \mathbb{E}[W] = \sum_{G_i \in \mathcal{C}} p_i \left(2\chi|E(G_i)| + \sum_{v \in V(G_i)}\mathbb{E}[\rho_v]\right).
\end{equation}
Conditioned on $\bm{B}_n$, the distribution of $\Delta_n'$ is given by
\[ \mathbb{P}\left(\Delta_n' = k | \bm{B}_n\right) =
\frac{1}{W_n}\sum_{v \in V(B_n)}(\chi\deg_{B_n}(v) + \rho_v)\bm{1}_{\{D_n'(H_n',v) = k\}},\]
and so 
\[ \condE{W_n\Delta'_n}{ \bm{B}_n} = W_n\condE{\Delta'_n }{ \bm{B}_n} = W_n \sum_{k=0}^\infty k \mathbb{P}\left(\Delta'_n = k | \bm{B}_n\right) = \sum_{v \in V(B_n)} D_n'(H_n',v)(\chi\deg_{B_n}(v) + \rho_v).\]
If we let $\delta_{G_i}(v)$ be the distance from $h_i$ to $v$ in the graph $G_i$, then by the law of total expectation, 
\begin{equation}\label{eq:hookexpdelta} \mathbb{E}[W\Delta'] = \sum_{G_i \in \mathcal{C}} p_i \sum_{v \in V(G_i)} \delta_{G_i}(v)\left(\chi\deg_{G_i}(v) + \mathbb{E}[\rho_v]\right),
\end{equation}
and by a similar argument
\begin{equation}\label{eq:hookvardelta}
\mathbb{E}\left[W(\Delta')^2\right] = \sum_{G_i \in \mathcal{C}} p_i \sum_{v \in V(G_i)} (\delta_{G_i}(v))^2\left(\chi\deg_{G_i}(v) + \mathbb{E}[\rho_v]\right).
\end{equation}
We can also develop equations for $\mathbb{E}\left[W^2\right]$ and $\mathbb{E}\left[(W\Delta')^2\right]$, but we simply note that these expressions are only dependent on $\mathcal{C}$, the probabilities $p_1, p_2, \ldots$, and the distributions of the $\rho_v$'s. We know what conditions are needed to apply Theorem \ref{thm:main} to the random recursive metric spaces $\bm{G}_0, \bm{G}_1, \bm{G}_2, \ldots$ constructed form the blocks $\bm{B}_n$. We simply need to show that these random recursive metric spaces are distributed as hooking networks. To avoid confusion with the blocks $G_i$, we will write $\bm{G}_n = (\mathcal{G}_n, D_n, H, P_n, S_n)$ for the random recursive metric spaces. 

\begin{theorem}\label{thm:hooking}
Let $\mathcal{C} = \{G_1, G_2, G_3, \ldots\}$ be a set of finite graphs with probabilities $p_1, p_2, p_3, \ldots$ that sum to 1, and independently for each graph $G_i$, let $\rho_v$ be a fitness distribution for each $v \in G_i$ satisfying the conditions above. Let $\mathcal{H}_0, \mathcal{H}_1, \mathcal{H}_2, \ldots$ be hooking networks constructed as above and let $\Delta_n$ be the insertion depth of the $n$'th block. If $\mathbb{E}\left[W^2\right], \mathbb{E}\left[(W\Delta')^2\right]$, and $\mathbb{E}\left[W(\Delta')^2\right]$ are all finite, then
 \[ \frac{\Delta_n}{\ln n} \xrightarrow{p} \frac{\mathbb{E}\left[W\Delta'\right]}{\mathbb{E}[W]} \qquad \mathrm{and} \qquad\frac{\Delta_n - \mathbb{E}[W]^{-1}\mathbb{E}\left[W\Delta'\right]\ln n}{\sqrt{\ln n}} \xrightarrow{d} \mathcal{N}\left(0,\frac{ \mathbb{E}\left[W(\Delta')^2\right]}{\mathbb{E}[W]}\right),\]
where $\mathbb{E}[W], \mathbb{E}\left[W\Delta'\right]$, and $\mathbb{E}\left[W(\Delta')^2\right]$ are given in \eqref{eq:hookweight}, \eqref{eq:hookexpdelta}, and \eqref{eq:hookvardelta} respectively. 
\end{theorem}

\begin{proof}
Similar to the proof of Theorem \ref{thm:trees}, we only need to show that the selection of a latch $v_n$ in the random recursive metric space $\bm{G}_n$ is made with probability \eqref{eq:hook} to prove they have the same distribution. Let $v$ be a vertex in $\mathcal{G}_n$, and let $k$ be the smallest value for which $v$ belongs to $B_k$ (recall that all hooks other than $H$ are fused with some other vertex in the hooking network). Let $I_{n,v} \subseteq \{k+1, \ldots, n\}$ be the set of indices of blocks $\bm{B}_i$ whose hook $H_i'$ is fused with $v$. Since $\rho_{H_i'} = 0$ for all $i \in I_{n,v}$, the probability of selecting $v$ as a latch to construct $\mathcal{G}_{n+1}$  is given by 
\[ P_n(v) = \frac{W_k}{S_n}P'_k(v) + \sum_{i\in I_{n,v}} \frac{W_i}{S_n}P_i'(H_i') = \frac{1}{S_n}\left(\chi \deg_{B_k}(v) + \rho_v + \sum_{i \in I_{n,v}} \chi\deg_{B_i}(H_i')\right). \]
Since fusing a hook $H'_i$ to $v$ increases the degree of $v$ by $\deg_i(H_i')$ in the hooking network, then $\chi \deg_{B_k}(v) + \rho_v + \sum_{i \in I_{n,v}} \chi\deg_{B_i}(H_i') = \chi\deg(v) + \rho_v$, where $\deg(v)$ is the degree of $v$ in $\mathcal{G}_n$, which is the numerator in \eqref{eq:hook}. Every time a block $\bm{B}_k$ is attached to the hooking network, $|E(B_k)|$ new edges are added, and using again that $\rho_{H'_n} = 0$ for all $n \geq 1$,
\[ S_n = \sum_{k=0}^n W_k= \sum_{k=0}^n\left(2\chi|E(B_n)| + \sum_{v \in V(B_k)}\rho_v\right) = 2\chi|E(\mathcal{G}_n)| + \sum_{v \in V(\mathcal{G}_n)}\rho_v.\]
By applying the handshaking lemma, $S_n$ is the denominator in \eqref{eq:hook}. Thus the probability of choosing any vertex $v \in V(\mathcal{G}_n)$ to be a latch is equal to \eqref{eq:hook} and the random recursive metric spaces are distributed as hooking networks. Applying Theorem \ref{thm:main} completes the proof.
\end{proof}

The classic hooking networks are recovered if the $\rho_v$ are all equal to a deterministic $\rho$ for all vertices $v$ other than $H_n'$ for $n \geq 1$ (and $\rho_{H_n'} =0$). If $\mathcal{C}$ is also a finite collection of finite graphs, then it is evident that the moment conditions of Theorem \ref{thm:hooking} are satisfied. When the weights $W_n$ are deterministic, which holds, for example, when $\chi =0$ and all blocks have the same number of vertices, or when $\rho =0$ for all vertices and all blocks have the same number of edges, a normal limit law for the insertion depth was proved in \cite{DEMA:21} (where the term {\em affinity} is used for the weight of the blocks). Theorem \ref{thm:main} generalizes this result in the classic case by allowing for blocks that produce random weights $W_n$. When $\rho_v$ are random, the moment conditions for Theorem \ref{thm:hooking} are also met if $\mathcal{C}$ is finite and if $\mathbb{E}\left[\rho^2_v\right] < \infty$ and $\mathbb{E}\left[\rho_v\rho_u\right] < \infty$ for all $u,v$ in a graph $G_i$.  

The set of blocks $\mathcal{C}$ can also be infinite. We present an example where the blocks are paths of geometric lengths. Fix $p \in (0,1)$ and let $\chi =0$ and $\rho =1$ (so latches are chosen uniformly at random among all the vertices when growing the hooking network). Let $W_n \sim Ge(p)$, and let $B_n$ be a path of length $W_n$ (with $W_n+1$ vertices). Then the weight of this block is $W_n$ and $P'_n(v) = 1/W_n$ for $v \neq H_n'$ (and $P'_n(H'_n) = 0$). Given $W_n$, the random variable $\Delta'_n$ is distributed as $\mathbb{P}(\Delta'_n = j) = 1/W_n$ for $j=1, 2, \ldots, n$. Quick calculations reveal $\mathbb{E}[W] = 1/p,$ $\mathbb{E}[W\Delta'] = 1/p^2$, and $\mathbb{E}[W(\Delta')^2] = (2-p)/p^3$. Applying Theorem \ref{thm:main} gives 
\[ \frac{\Delta_n}{\ln n} \xrightarrow{p} \frac{1}{p}\qquad \mathrm{and} \qquad \frac{\Delta_n - \frac{1}{p}\ln n}{\sqrt{\ln n}} \xrightarrow{d} \mathcal{N}\left(0, \frac{2-p}{p^2}\right).\]

We note that we may apply Theorem \ref{thm:main} to further generalizations of hooking networks; for example, we may also assign random lengths to the edges (similar to the random lengths added to edges of trees in Section \ref{sec:trees}), though we note that the shortest path from one vertex to another may change depending on the edge lengths.

\subsection{Constructions from line segments}

In the applications we have seen so far, the blocks have been discrete metric spaces. But we can certainly take continuous spaces as well. As a simple example, we present random trees constructed from line segments similar to those in \cite{ADGO:17,CUHA:17, HAAS:17}. 

Let $W, W_1, W_2, \ldots$ be i.i.d. non-negative random variables with $\mathbb{P}[W = 0] < 1$, and let $B_n$ be a line segment of length $W_n$. We recursively construct the spaces $\mathcal{T}_0, \mathcal{T}_1, \mathcal{T}_2, \ldots$ in the following way: $\mathcal{T}_0$ is the line segment $B_0$ with one endpoint called a root, and for $n\geq 0$, sample a point $v$ uniformly at random from $\mathcal{T}_{n}$ and glue the segment $B_n$ to $v$. 
A law of large numbers for the insertion depth was proved for the case $W=1$ in \cite{HAAS:17}. We apply Theorem \ref{thm:main} to the more general spaces $\mathcal{T}_n$.  

In the notation of random recursive metric spaces, define the blocks $\bm{B}_n$ in the following way. Let $B_n$ a copy of the line segment $[0,W_n]$ with euclidean metric $D_n'$ and let the point 0 be the hook $H_n'$. Let $P_n'$ be uniform on $B_n$ (so $P_n'$ has the same distribution as the uniform distribution $U[0,W_n]$). Set the weight of $\bm{B}_n$ as $W_n$. We can show that the random recursive metric spaces are distributed as $\mathcal{T}_n$, and so we just need to find the moment conditions on $W$ needed to apply Theorem \ref{thm:main}. 

\begin{theorem}\label{thm:segments}
Let $\mathcal{T}_0, \mathcal{T}_1, \mathcal{T}_2, \ldots$ be the random spaces defined above and let $\Delta_n$ be the distance from the root to where the $n$'th segment is glued. If $\mathbb{E}\left[W^4\right] < \infty$, then 
\[ \frac{\Delta_n}{\ln n} \xrightarrow{p} \frac{\mathbb{E}[W^2]}{2\mathbb{E}[W]} \qquad \mathrm{and} \qquad \frac{\Delta_n - \frac{1}{2}\left(\mathbb{E}[W]\right)^{-1}\mathbb{E}[W^2]\ln n}{\sqrt{\ln n}} \xrightarrow{d} \mathcal{N}\left( 0, \frac{\mathbb{E}[W^3]}{3\mathbb{E}[W]}\right). \]
\end{theorem}

\begin{proof}
It is evident that the random recursive metric spaces constructed from $\bm{B}_n$ are distributed as $\mathcal{T}_0, \mathcal{T}_1, \mathcal{T}_2, \ldots$; at each step $n \geq 0$ we can sample $v$ from $\mathcal{T}_{n}$ by first sampling the segment $B_k$ with probability $W_k/S_{n}$ and sampling a point uniformly in $B_k$. From the first two moments of the uniform distribution we get that 
\[ \condE{W_n\Delta'_n}{W_n} = \frac{W_n^2}{2}, \qquad \condE{W_n(\Delta'_n)^2}{W_n} = \frac{W_n^3}{3}, \qquad \condE{(W_n\Delta'_n)^2}{W_n} = \frac{W_n^4}{3},\]
and so from the law of total expectation
\[ \mathbb{E}\left[W\Delta' \right]  = \frac{\mathbb{E}\left[ W^2\right]}{2}, \qquad  \mathbb{E}\left[W(\Delta')^2 \right] = \frac{\mathbb{E}\left[ W^3\right]}{3},  \qquad  \mathbb{E}\left[(W\Delta')^2 \right]= \frac{\mathbb{E}\left[ W^4\right]}{3}\]
Thus we apply Theorem \ref{thm:main} if $\mathbb{E}\left[W^4\right] < \infty$, proving the theorem.
\end{proof}

\section{Proof of Theorem \ref{thm:main}}\label{sec:proofs}

Given the blocks $\bm{B}_0, \bm{B}_1, \bm{B}_2, \ldots$, for every $n$ define for $k=0, 1, \ldots, n-1$ the sequence of independent (conditioned on the blocks) random variables $Y_{n,k} = J_{n,k}\Delta'_{n,k}$, where 
\[J_{n,k} \sim \text{Be}\left(\frac{W_k}{S_k}\right),\]
$\Delta'_{n,k} \sim \Delta_k'$ for $k\geq 1$, and $\Delta'_{n,0} \sim \Delta'_0$. Notice that $J_{n,0} = 1$. Conditioned on $\bm{B}_0, \bm{B}_1, \bm{B}_2, \ldots$, the random variables $J_{n,k}$ and $\Delta'_{n,k}$ are taken to be independent (though of course $W_k$ and $\Delta'_{n,k} \sim \Delta'_k$ are dependent on the block $\bm{B}_k$). 

As mentioned in the introduction, several previous works have described the insertion depth in related models as sums of Bernoulli random variables. Here we provide a similar observation for random recursive metric spaces, in this case describing the insertion depth as the sum of $Y_{n,k}$. Let $\bm{G}_0, \bm{G}_1, \bm{G}_2, \ldots$ be random recursive metric spaces constructed from the blocks $\bm{B}_n$. By the definition of the probability measure $P_n$, we can choose the latch $v_{n+1}$ by first choosing a block $\bm{B}_k$ already attached to $G_n$ with probability $W_k/S_n$, and then choosing the latch $v_{n+1}$ within $B_k$ according to $P'_k$. Once the new block $\bm{B}_{n+1}$ is attached to $v_{n+1}$, we will say that the block $\bm{B}_{n+1}$ is a child of the block $\bm{B}_k$. Similarly define the notion of blocks as parents, descendants, and ancestors. For a fixed $n$, let $0 = k_0< k_1 < \cdots < k_t$ be all the values $k_i$ for which $J_{n,k_i} = 1$, and let $k_{t+1} = n$. We will develop a coupling between the collection of random variables $Y_{n,k}, J_{n,k},\Delta'_{n,k}$, and random recursive metric spaces, whereby $J_{n,k} = 1$ if and only if $\bm{B}_k$ is an ancestor of $\bm{B}_n$ in the random recursive metric space and $D_n(v_{k_i}, v_{k_{i+1}})$ is distributed as $\Delta_{n,k_{i}}'$ for all $i=0,\ldots, t$. If this holds, then 
\[ \Delta_n = \sum_{i=0}^{t} D_n(v_{k_i}, v_{k_{i+1}}) \sim \sum_{k=0}^{n-1} J_{n,k}\Delta'_{n,k} =\sum_{k=0}^{n-1} Y_{n,k}.\]
The following lemma is contained in the proof of \cite[Lemma 3]{SENI:19}, but we provide a proof for the sake of completeness.

\begin{lemma}\label{lem:buckets}
With $Y_{n,k}$ defined above, the insertion depth $\Delta_n$ of the $n'th$ block added to a random recursive metric space $\bm{G}_n$ is distributed as 
\[\Delta_n \sim\sum_{k=0}^{n-1} Y_{n,k}.\]
\end{lemma}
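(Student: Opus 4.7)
The plan is to decompose $\Delta_n$ along the chain of blocks through which the geodesic from the master hook $H$ to the latch $v_n$ passes, and then match the resulting expression with the claimed sum at the level of conditional distributions given the blocks $\bm{B}_0,\bm{B}_1,\ldots$.

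First, I would introduce an auxiliary \emph{block tree} $\mathcal{T}_n$ on $\{0,1,\ldots,n\}$, in which the parent $\mathrm{Par}(k)$ of $k\geq 1$ is the index of the block containing the latch $v_k$; so $\mathrm{Par}(k)=j$ has conditional probability $W_j/S_{k-1}$. Let $\mathrm{Anc}(n)$ denote the set of vertices on the $\mathcal{T}_n$-path from $n$ to $0$. Because blocks are attached by identifying latches with hooks, the geodesic in $\bm{G}_n$ from $H$ to $v_n$ traces this path, entering each block $B_k$ with $k\in\mathrm{Anc}(n)\setminus\{n\}$ at $H'_k$ and leaving at $v_{c_n(k)}$, where $c_n(k)$ is the child of $k$ on the path. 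This yields the representation
\[
\Delta_n \;=\; \sum_{k=0}^{n-1} K_k\, X_k, \qquad K_k := \mathbf{1}[k\in\mathrm{Anc}(n)], \quad X_k := D'_k(H'_k, v_{c_n(k)}),
\]
where $X_k$ may be defined arbitrarily on $\{K_k=0\}$. Conditional on $\mathcal{T}_n$ and the blocks, each latch $v_j$ is an independent $P'_{\mathrm{Par}(j)}$-sample; in particular, the $X_k$'s arising from $k\in\mathrm{Anc}(n)$ are independent with $X_k\sim\Delta'_k$, and are jointly independent of the $K_k$'s, which depend only on the identities of the host blocks and not on the positions within them.

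The crux is proving that, conditional on the blocks, the indicators $K_1,\ldots,K_{n-1}$ are mutually independent Bernoullis with $K_k\sim\mathrm{Be}(W_k/S_k)$ (and $K_0\equiv 1$). Conditioning on $\mathrm{Par}(n)$ and inducting on $n$ yields the marginal telescoping identity
\[
\mathbb{P}(k\in\mathrm{Anc}(n)) \;=\; \frac{W_k}{S_{n-1}} + \sum_{j=k+1}^{n-1}\frac{W_j}{S_{n-1}}\cdot\frac{W_k}{S_k} \;=\; \frac{W_k}{S_k}.
\]
Iterating the same argument down the chain gives $\mathbb{P}(k_1,\ldots,k_r\in\mathrm{Anc}(n)) = \prod_{i=1}^{r} W_{k_i}/S_{k_i}$ for every $1\leq k_1<\cdots<k_r<n$: given $k_r\in\mathrm{Anc}(n)$, the portion of the chain below $k_r$ is determined by latch choices of blocks of index at most $k_r$, which are probabilistically independent of the events above $k_r$ that forced $k_r$ onto the chain from $n$. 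Mutual independence of the $K_k$'s then follows by inclusion--exclusion.

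Assembling the pieces, the conditional joint law of $(K_k X_k)_{0\leq k<n}$ given the blocks coincides with that of $(J_{n,k}\Delta'_{n,k})_{0\leq k<n}$, and averaging over the blocks yields the claimed equality in distribution. I expect the independence of the ancestor indicators to be the main obstacle: a priori the shared path structure in $\mathcal{T}_n$ couples all of the $K_k$'s together, and disentangling them requires the Markov-type decomposition of $\mathcal{T}_n$ above and below each potential chain-index.
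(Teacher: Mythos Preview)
Your argument is correct and proceeds in the opposite direction from the paper's. The paper fixes $n$, samples the $J_{n,k}$ and $\Delta'_{n,k}$ first, and then \emph{builds} an auxiliary copy $\bm{H}_{n,0},\ldots,\bm{H}_{n,n}$ of the random recursive metric space using these variables: if $J_{n,k}=1$ (or $k=n$) the parent of block $k$ is forced to be the largest $l<k$ with $J_{n,l}=1$ and the latch in $B_l$ is placed at distance $\Delta'_{n,l}$ from $H'_l$; otherwise the parent and latch are resampled freshly. The telescoping identity
\[
\frac{W_l}{S_l}\prod_{k'=l+1}^{k-1}\frac{S_{k'-1}}{S_{k'}}=\frac{W_l}{S_{k-1}}
\]
is used to check that this construction has the correct law, after which $\Delta_n=\sum_k Y_{n,k}$ holds pathwise. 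You instead start from the model as given, introduce the block tree $\mathcal{T}_n$, and compute the joint law of the ancestor indicators $K_k$ and the within-block increments $X_k$ directly; the same telescoping identity appears when you establish $\mathbb{P}(k_1,\ldots,k_r\in\mathrm{Anc}(n))=\prod_i W_{k_i}/S_{k_i}$, and the step from this product formula to full independence via inclusion--exclusion is standard. Your route avoids constructing an auxiliary space and makes the Markov-type splitting of $\mathcal{T}_n$ above and below each ancestral index explicit; the paper's route has the advantage of producing a concrete coupling in which the equality $\Delta_n=\sum_k Y_{n,k}$ is an identity of random variables rather than of laws.
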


\begin{proof}
Fix the step $n$ throughout this proof. For a sequence of blocks $\bm{B}_0, \bm{B}_1, \ldots, \bm{B}_n$, sample the random variables $J_{n,k}$, $\Delta'_{n,k}$ and $Y_{n,k}$ as described above. We construct a sequence of random recursive metric spaces $\bm{G}_{0}, \bm{G}_{1},  \ldots, \bm{G}_{n}$. Set $\bm{G}_{0} = \bm{B}_0$. For $k= 1, 2, \ldots, n$, choose the latch $v_{k}$ by first choosing a block $\bm{B}_{\ell_k}$ in the following way:
\begin{itemize}
\item If $k=n$ or $J_{n,k} =1$, then $\ell_k = \max_{0 \leq k' < k}\{J_{n,k'} = 1\}$. This is always defined since $J_{n,0} = 1$. Next choose the latch $v_k$ within $B_{\ell_k}$ according to $P_{\ell_k}'$ conditioned on $D_{\ell_k}'(H'_{\ell_k}, v_k) = \Delta'_{n,\ell_k}$ (this is well defined by the Radon-Nikodym theorem since $\{x \in B_{\ell_k} : D'_{\ell_k}(H_{\ell_k}, x) = \Delta'_{n,\ell_k}\}$ is a closed set in $B_{\ell_k}$, and so belongs to the sigma algebra for which $P_{\ell_k}'$ is defined). 
\item If $k \neq n$ and $J_{n,k} = 0$, then $\ell_k$ is chosen at random among $0, 1, 2, \ldots, k-1$ with probability $W_{\ell_k}/S_{k-1}$, and $v_k$ is chosen in $B_{\ell_k}$ according to $P_{\ell_k}'$. 
\end{itemize}

If $0 = k_0< k_1 < \cdots < k_t$ are all the values $k_i$ for which $J_{n,k_i} = 1$, notice that the parent of $\bm{B}_n$ is $\bm{B}_{k_t}$, the parent of $\bm{B}_{k_t}$ is $\bm{B}_{k_{t-1}}$, and so on until $\bm{B}_0$. Conversely, if $J_{n,k} = 0$, then $\bm{B}_k$ cannot be the parent of any of $\bm{B}_{k_1}, \bm{B}_{k_2}, \ldots, \bm{B}_{k_t}, \bm{B}_n$. Therefore, the block $\bm{B}_k$ is an ancestor of $\bm{B}_n$ if and only if $J_{n,k} = 1$. 

To prove that $\bm{G}_{0}, \bm{G}_{n}, \ldots, \bm{G}_{n}$ is distributed as random recursive metric spaces, we need only show that the choice of $\ell_k$ is made with probability $W_{\ell_k}/S_{k-1}$. Indeed we see that for $k < n$, $\ell_k$ is chosen with probability 
\begin{equation}\label{eq:l}
 \mathbb{P}\left(J_{n,k} = 1, J_{n,\ell_k} = 1, J_{n,\ell_k+1}=\cdots=J_{n,k-1} =0\right) + \mathbb{P}(J_{n,k} = 0)\frac{W_{\ell_k}}{S_{k-1}}, 
\end{equation}
and when $k=n$, $\ell_k < n-1$ is chosen with probability 
\begin{equation}\label{eq:ln}
 \mathbb{P}\left(J_{n,\ell_k} = 1, J_{n,\ell_k+1}=\cdots=J_{n,k-1} =0\right),
\end{equation}
and $\ell_k = n-1$ is chosen with probability $\mathbb{P}(J_{n,n-1} = 1) = W_{n-1}/S_{n-1} = W_{\ell_k}/S_{k-1}$. 
Since 
\begin{equation*}
\mathbb{P}\left(J_{n,\ell_k} = 1, J_{n,\ell_k+1}=\cdots=J_{n,k-1} =0\right) =  \frac{W_{\ell_k}}{S_{\ell_k}} \prod_{k'=\ell_k+1}^{k-1}\left(1 - \frac{W_{k'}}{S_{k'}}\right) = \frac{W_{\ell_k}}{S_\ell}\prod_{k'={\ell_k}+1}^{k-1}\frac{S_{k'-1}}{S_{k'}} = \frac{W_{\ell_k}}{S_{k-1}},
\end{equation*}
both Equations \eqref{eq:l} and \eqref{eq:ln} simplify to $W_{\ell_k}/S_{k-1}$. Therefore, we have indeed constructed a sequence of random recursive metric spaces and from the construction described, we see that 
\[\Delta_n = D_n(H, v_n) \sim \sum_{k=0}^{n-1}Y_{n,k}. \qedhere\]
\end{proof}

We now introduce the martingales we will use to prove Theorem \ref{thm:main}. For every $n\geq 1$ and $k=0, \ldots, n-1$, let $\mathcal{F}_{n,k}$ be the $\sigma$-algebra generated by 
$W_1, \ldots, W_k$ and all $Y_{m,\ell}$ defined for $m \leq n, \ell \leq k$. Note that the filtrations $(\mathcal{F}_{n,k})_{k=0}^{n-1}$ are nested (that is, $\mathcal{F}_{n,k} \subseteq \mathcal{F}_{n+1, k}$). Define
\[ M_{n,k} = \sum_{\ell=1}^k \left(Y_{n,\ell} - \condE{Y_{n,\ell}}{\mathcal{F}_{n,\ell-1}}\right) \qquad\mathrm{and}\qquad L_{n,k}= \sum_{\ell=1}^k \condE{Y_{n,\ell}}{\mathcal{F}_{n,\ell-1}},\]
so that $\sum_{\ell=0}^k Y_{n,\ell} = Y_{n,0} + M_{n,k} + L_{n,k}$. Then by Lemma \ref{lem:buckets}, $\Delta_n \sim Y_{n,0} + M_{n,n-1} + L_{n,n-1}$. We will apply a martingale central limit theorem, namely \cite[Corollary 3.1]{HAHE:80}, to $\{M_{n,k}, 1 \leq k < n, n \geq 2\}$ scaled by $\sqrt{\ln n}$ to prove Theorem \ref{thm:main}. We begin with some useful calculations.

\begin{lemma}
Suppose that $\mathbb{E}\left[W^2\right], \mathbb{E}\left[(W\Delta')^2\right]$, and $\mathbb{E}\left[W(\Delta')^2\right]$ are all finite. Then for almost every sequence $\bm{B}_0, \bm{B}_1, \bm{B}_2, \ldots$ of blocks, the following hold.
\begin{equation}\label{eq:expconY}
\condE{Y_{n,k}}{\mathcal{F}_{n,k-1}}=\frac{\mathbb{E}[W\Delta']}{k \mathbb{E}[W]} +O(k^{-2}),
\end{equation}
\begin{equation}\label{eq:varconY}
\condE{\left(Y_{n,k} - \mathbb{E}[Y_{n,k} | \mathcal{F}_{n,k-1}]\right)^2}{\mathcal{F}_{n,k-1}} =\frac{\mathbb{E}[W(\Delta')^2]}{k \mathbb{E}[W]} +O(k^{-2}),
\end{equation}
and for $c > 0$,
\begin{equation}\label{eq:lindY}
\condE{Y_{n,k}^2\bm{1}_{\{ Y_{n,k} > c\}}}{\mathcal{F}_{n,k-1}}= \frac{\mathbb{E}[W(\Delta')^2\bm{1}_{\{\Delta' > c\}}]}{k \mathbb{E}[W]} +O(k^{-2}).
\end{equation}
\end{lemma}

\begin{proof}
Throughout this argument we use that for almost every sequence $W_1, W_2, \ldots$ of weights, we have $S_k = k\mathbb{E}[W] + o(k^{3/4})$. This is guaranteed for example by the law of the iterated algorithm, since the $W_i$ are i.i.d. and have finite variances. Also note that given our conditions $\mathbb{E}\left[W^2\right] < \infty$ and $\mathbb{E}\left[(W\Delta')^2\right] < \infty$, since $W$ and $\Delta'$ are nonnegative, we also have 
\[ \mathbb{E}\left[W^2\Delta'\right] = \mathbb{E}\left[W^2\Delta'\bm{1}_{\{\Delta' \leq 1\}}\right] + \mathbb{E}\left[W^2\Delta'\bm{1}_{\{\Delta' > 1\}}\right] \leq \mathbb{E}\left[W^2\right] + \mathbb{E}\left[(W\Delta')^2\right] < \infty.\]
Define $\mathcal{F}_{n,k-1}^\ast = \sigma\left(\mathcal{F}_{n,k-1} \cup \sigma(W_k, \Delta_{n,k}')\right).$ Then 
\[ \condE{Y_{n,k}}{\mathcal{F}_{n,k-1}^\ast} =  \frac{W_k}{S_{k-1} + W_k}\Delta_{n,k}' = \Delta_{n,k}'\left( \frac{W_k}{S_{k-1}} + \frac{W_k^2}{S_{k-1}^2 + S_{k-1}W_k}\right).\]
Since $S_{k-1}$ and $W_k$ are nonnegative, the second term in the parentheses above is bounded by $W_k^2S_{k-1}^{-2}$. From our approximation for $S_k$ and since $\mathbb{E}\left[W^2 \Delta'\right]< \infty$, applying the tower rule yields that almost surely 
\[ \condE{Y_{n,k}}{\mathcal{F}_{n,k-1}} = \condE{\condE{Y_{n,k}}{\mathcal{F}_{n,k-1}^\ast}}{\mathcal{F}_{n,{k-1}}} = \frac{\mathbb{E}[W\Delta']}{k \mathbb{E}[W]} + O\left(k^{-2}\right),\]
which is \eqref{eq:expconY}. 
Similarly we have the conditional expectation
\[\condE{Y_{n,k}^2 }{ \mathcal{F}_{n,k-1}^\ast}=\frac{W_k}{S_{k-1} + W_k}(\Delta'_{n,k})^2,\]
and from our approximation for $S_k$ and the assumption $\mathbb{E}\left[(W\Delta')^2\right] < \infty$, applying the tower rule yields that almost surely
\[ \condE{Y_{n,k}^2}{\mathcal{F}_{n,{k-1}}} = \frac{\mathbb{E}[W (\Delta')^2]}{k \mathbb{E}[W]} + O\left(k^{-2}\right).\]
Subtracting $\left(\condE{Y_{n,k}}{\mathcal{F}_{n,{k-1}}}\right)^2 = O\left(k^{-2}\right)$ from the equation above yields \eqref{eq:varconY}. Finally, note that $Y_{n,k} > c$ if and only if $J_{n,k} = 1$ and $\Delta_{n,k}' > c$. Thus 
\[ \condE{Y_{n,k}^2\bm{1}_{\{ Y_{n,k} > c\}}}{ \mathcal{F}^\ast_{n,k-1}} = \frac{W_k}{S_{k-1} + W_k}\left(\Delta_{n,k}'\right)^2\bm{1}_{\{ \Delta'_{n,k} > c\}}.\]
Once more using the tower rule provides that \eqref{eq:lindY} holds almost surely.
\end{proof}

We are now ready to prove the main theorem of this work. 

\begin{proof}[Proof of Theorem \ref{thm:main}]
We start by proving that $\left\{(\ln n)^{-1/2}M_{n,k}, 1 \leq k < n, n \geq 2\right\}$ satisfies the conditions necessary to apply \cite[Corollary 3.1]{HAHE:80}. Note that 
\begin{equation}\label{eq:MsareYs}
M_{n,k} - M_{n,k-1} = Y_{n,k} - \condE{Y_{n,k}}{\mathcal{F}_{n,k-1}}.
\end{equation}
We see immediately that $\mathbb{E}[M_{n,k}] = 0$, and from \eqref{eq:MsareYs} we have that $\condE{M_{n,k} - M_{n,k-1}}{\mathcal{F}_{n,k-1}} = 0.$
From the moment conditions stated in Theorem \ref{thm:main}, we have also that $\mathbb{E}\left[M_{n,k}^2\right] < \infty$ for all $n$ and $k$. Thus $\left\{(\ln n)^{-1/2}M_{n,k}, 1 \leq k < n, n \geq 2\right\}$ is a zero-mean square-integrable martingale array. 

For the conditional variance, we conclude from \eqref{eq:varconY} and \eqref{eq:MsareYs} that almost surely, 
\begin{align*}
 \sum_{k=2}^{n-1} \condE{\left(M_{n,k} - M_{n,k-1}\right)^2}{\mathcal{F}_{n,k-1}} &=\sum_{k=2}^{n-1} \condE{\left(Y_{n,k} - \condE{Y_{n,k}}{\mathcal{F}_{n,k-1}}\right)^2}{\mathcal{F}_{n,k-1}}\\
& = \frac{\mathbb{E}\left[W(\Delta')^2\right]}{\mathbb{E}[W]} \ln n+ O(1). 
\end{align*}
Therefore,
\[ \sum_{k=2}^{n-1} \condE{\left(\frac{M_{n,k} - M_{n,k-1}}{\sqrt{\ln n}}\right)^2}{\mathcal{F}_{n,k-1}} \xrightarrow{p} \frac{\mathbb{E}\left[W(\Delta')^2\right]}{\mathbb{E}[W]}.\]

As for the conditional Lindeberg condition, we start by noting that for any $c>0$, \eqref{eq:expconY} guarantees that almost surely, for large enough $k$ we have $\condE{Y_{n,k}}{\mathcal{F}_{n,k-1}} < c$. Thus almost surely for large enough $k$, $\left\lvert Y_{n,k} - \condE{Y_{n,k}}{\mathcal{F}_{n,k-1}}\right\rvert > c$ implies that $Y_{n,k} > c $ and that $\left(Y_{n,k} - \condE{Y_{n,k}}{\mathcal{F}_{n,k-1}}\right)^2 < Y_{n,k}^2.$
Thus from \eqref{eq:lindY} and \eqref{eq:MsareYs}, we have that almost surely, 
\begin{align*}
\sum_{k=2}^{n-1}\condE{\left(M_{n,k} - M_{n,k-1}\right)^2\bm{1}_{\{|M_{n,k} - M_{n,k-1}| > c\}}}{\mathcal{F}_{n,k-1}} &< \sum_{k=2}^{n-1}\condE{Y_{n,k}^2\bm{1}_{\{Y_{n,k} > c\}}}{\mathcal{F}_{n,k-1}}\\
&= \frac{\mathbb{E}[W(\Delta')^2\bm{1}_{\{\Delta' > c\}}]}{\mathbb{E}[W]}\ln n +O(1).
\end{align*}
For any $\varepsilon > 0$, since $\mathbb{E}\left[W(\Delta')^2\right] < \infty$, we have that $\mathbb{E}\left[ W(\Delta')^2\bm{1}_{\{\Delta' > \varepsilon \sqrt{\ln n}\}}\right] \to 0.$ Thus
\[ \sum_{k=2}^{n-1}\condE{\left(\frac{M_{n,k} - M_{n,k-1}}{\sqrt{\ln n}}\right)^2\bm{1}_{\{|M_{n,k} - M_{n,k-1}| > \varepsilon \sqrt{\ln n}\}}}{\mathcal{F}_{n,k-1}} \xrightarrow{p} 0.\]
All the conditions of \cite[Corollary 3.1]{HAHE:80} are satisfied, and so 
\begin{equation}\label{eq:normalM}
 \frac{M_{n,n-1}}{\sqrt{\ln n}} \xrightarrow{d} \mathcal{N}\left( 0, \frac{\mathbb{E}\left[W(\Delta')^2\right]}{\mathbb{E}[W]}\right).
\end{equation}
By applying \eqref{eq:expconY}, we have that almost surely, 
\begin{equation}\label{eq:sumL}
 L_{n,n-1} = \sum_{k=1}^{n-1} \condE{Y_{n,k}}{\mathcal{F}_{n,k-1}} = \frac{\mathbb{E}\left[W\Delta'\right]}{\mathbb{E}[W]}\ln n + O(1),
\end{equation}
Since $\Delta_n \sim Y_{n,0} + M_{n,n-1} + L_{n,n-1}$, we can conclude from \eqref{eq:normalM} and \eqref{eq:sumL} the central limit theorem
\[ \frac{\Delta_n - \mathbb{E}[W]^{-1}\mathbb{E}\left[W\Delta'\right]}{\sqrt{\ln n}} \xrightarrow{d} \mathcal{N}\left( 0, \frac{\mathbb{E}\left[W(\Delta')^2\right]}{\mathbb{E}[W]}\right).\]
As for the weak law of large numbers, we have $M_{n,n-1}/\ln n \xrightarrow{d} 0$ from \eqref{eq:normalM} and Slutsky's theorem. Since we have convergence in distribution to a constant, the convergence holds in probability as well. From \eqref{eq:sumL}, we can conclude that $L_{n,n-1}/\ln n \xrightarrow{p} \mathbb{E}[W]^{-1}\mathbb{E}\left[W\Delta'\right],$ and so 
\[ \frac{\Delta_n}{\ln n} = \frac{Y_{n,0} + M_{n,n-1} + L_{n,n-1}}{\ln n} \xrightarrow{p} \frac{\mathbb{E}\left[W\Delta'\right]}{\mathbb{E}[W]}. \qedhere\]
\end{proof}

\section*{Acknowledgment}

The author would like to thank the referees for their comments in improving the presentation of this work, and in particular, an anonymous referee for suggesting the martingale approach for the proof of Theorem \ref{thm:main}. The author would also like to thank Svante Janson for valuable and helpful discussions in preparation of this work.

\end{document}